\documentclass{amsart}


\usepackage{todonotes}
\usepackage{enumerate}
\usepackage{amssymb}
\usepackage{amsmath,amscd}
\usepackage{amsthm}
\usepackage{graphicx}
 \usepackage{verbatim}
 \usepackage{stmaryrd}

\newcommand{\R}{\mathbb{R}}

\newcommand{\In}{\subset}

\newcommand{\bil}[1]{\left< #1\right>}

\newcommand{\Ad}{\operatorname{Ad}}

\newcommand{\II}{I\!I}

\newcommand{\g}{\mathfrak{g}}
\newcommand{\h}{\mathfrak{h}}
\newcommand{\m}{\mathfrak{m}}



%
%


\newtheorem{theorem}{Theorem}

\newtheorem*{corollary*}{Corollary}
\newtheorem{lemma}[theorem]{Lemma}
\newtheorem{proposition}[theorem]{Proposition}

\newtheorem*{maintheorem}{ Main Theorem}

\theoremstyle{definition}
\newtheorem{definition}[theorem]{Definition}

\theoremstyle{remark}

\newtheorem{example}[theorem]{Example}

\title[Virtual immersions and symmetric spaces]{Virtual immersions and a characterization of symmetric spaces}

\author[R.~Mendes]{Ricardo A. E. Mendes$^* \dag$}
\address{University of Cologne, Germany}
\email{rmendes@gmail.com}

\author[M.~Radeschi]{Marco Radeschi}
\address{University of Notre Dame, US}
\email{mradesch@nd.edu}

\thanks{$\dag$ received support from DFG ME 4801/1-1}

\subjclass[2010]{49Q05, 53A10, 53C35}
\keywords{Symmetric space, Isometric immersion}

\begin{document}

\maketitle
\begin{abstract}
We define virtual immersions, as a generalization of isometric immersions in a pseudo-Riemannian vector space. We show that  virtual immersions possess a second fundamental form, which is in general not symmetric. We prove that a manifold admits a virtual immersion with skew symmetric second fundamental form, if and only if it is a symmetric space, and in this case the virtual immersion is essentially unique.
\end{abstract}

\section{Introduction}

Often in Riemannian geometry, one needs to embed a Riemannian manifold into Euclidean or Lorentzian space. In this paper we introduce a generalized, and more ``intrinsic'' version of such embeddings, and utilize them to give a new characterization of symmetric spaces.

Given a Riemannian manifold $M$ and an isometric immersion $\phi:M\to V$ into a vector space $(V,\bil{,})$ endowed with a non-degenerate symmetric bilinear form (a \emph{pseudo-Euclidean vector space}), then the pullback $\phi^*TV$ is a trivial vector bundle over $M$, the differential $\phi_*$ defines an immersion $\phi_*:TM\to \phi^*TV$, and the classical results on isometric immersions show that the canonical (flat) connection on $\phi^*TV$ induces, by projecting onto $TM$, the Levi Civita connection on $M$. We use this properties to define a \emph{virtual immersion} of a Riemannian manifold $M$, as a flat bundle $M\times V$, with $V$ a pseudo-Euclidean vector space, together with an isometric embedding $TM\to M\times V$ such that the flat connection on $M\times V$ induces the Levi Civita connection on $M$ (see Definition \ref{D:genimm} for an equivalent definition).

It turns out that, just like the usual isometric immersions, one can define a second fundamental form, but unlike the usual setting this is in general \emph{not} symmetric. As a matter of fact, it can be easily shown that a virtual immersion is (locally) induced by an isometric immersion, if and only if the second fundamental form is symmetric.

In \cite{MR17}, we first introduced virtual immersions with $V$ Euclidean (rather than pseudo-Euclidean) in the context of verifying, for certain compact symmetric spaces, a conjecture of Marques-Neves-Schoen about the index of closed minimal hypersurfaces. In that same paper, it was proved that, when $V$ has a Euclidean metric, virtual immersions with skew-symmetric second fundamental form exist only on compact symmetric spaces (cf. \cite{MR17}, Theorem B).

The main result of this paper is to extend the classification of virtual immersions with skew symmetric second fundamental form to the more general case in which the metric on $V$ is pseudo-Euclidean:

\begin{maintheorem}
\label{MT:classification}
Let $(M,g)$ be a Riemannian manifold. Then $M$ admits a virtual immersion $\Omega$  with skew-symmetric second fundamental form if and only if it is a symmetric space. In this case, $\Omega$ is essentially unique.
\end{maintheorem}

Virtual immersions, in other words, provide a bundle-theoretic characterization of symmetric spaces, although we expect them to have independent interest on more general spaces.

The paper is organized as follows: in Section \ref{S:intro} we define virtual immersions and their second fundamental form, and establish their fundamental equations. In Section \ref{S:virtual-imm} we prove the ``if'' part of the Main Theorem, producing a virtual immersion with skew-symmetric second fundamental form on any symmetric space. In Section \ref{S:rigidity} we prove the ``only if'' part of the Main Theorem, showing that a virtual immersion with skew-symmetric second fundamental form forces the manifold to be a symmetric space. In this last section, we also glue the pieces together, and prove the Main Theorem.

{\bf Convention:} We will denote by $R$ the curvature tensor, and follow the sign convention 
$ R(X,Y)Z= \nabla_Y\nabla_X Z - \nabla_X \nabla_Y Z + \nabla_{[X,Y]}Z $.

\section{Virtual immersions}\label{S:intro}

Let $(M,g)$ be a Riemannian manifold, and let $(V,\bil{,})$ denote a real vector space endowed with a nondegenerate, symmetric bilinear form. We call such $(V, \bil{,})$ a \emph{pseudo-Euclidean vector space}. A $V$-valued virtual immersion of $M$ is, roughly speaking, an immersion of $TM$ into the trivial bundle $M\times V$, such that the natural flat connection on $M\times V$ induces the Levi-Civita connection of $M$. Such objects generalize isometric immersions of Riemannian manifolds in Lorentzian space.

Although this is the idea behind virtual immersions, we introduce such structures in a different way, more convenient for computations --- see Proposition \ref{P:equivalentdefinition} for a proof that the two definitions coincide.

\begin{definition}
\label{D:genimm}
Let $(M,g)$ be a Riemannian manifold, and $(V,\bil{,})$ a finite-dimensional, pseudo-Euclidean real vector space. Let $\Omega$ be a $V$-valued one-form on $M$. We say $\Omega$ is a \emph{virtual immersion} if the following two conditions are satisfied:
\begin{enumerate}[a)]
\item $\left<\Omega(X),\Omega(Y)\right>=g(X,Y)$ for every $p\in M$, and every $X,Y\in T_pM$.
\item $\left<d\Omega(X,Y),\Omega(Z)\right>=0$ for every $p\in M$, and every $X,Y, Z\in T_pM$.
\end{enumerate}
We say two virtual immersions $\Omega_i:TM\to V_i$, $i=1,2$ are equivalent if there is a linear isometry $(V_1,\bil{,}_1 )\to (V_2,\bil{,}_2 )$ making the obvious diagram commute.
\end{definition}

Letting $\pi:TM\to M$ denote the foot-point projection, any virtual immersion $\Omega:TM\to V$ induces a vector bundle homomorphism $(\pi, \Omega):TM\to M\times V$. By condition $(a)$ in the definition, this map is an isometric immersion of (\emph{pseudo-Euclidean}) vector bundles.

Fixing $p\in M$, denote by $\Omega_p:T_pM\to V$ the restriction of $\Omega$ to $T_pM$. Since $\Omega_p$ is an isometric immersion, the space $T_pM$ can be identified with its image, which we will still denote $T_pM$. Moreover, since the metric on $T_pM$ is positive definite, its orthogonal complement $\nu_pM:=(T_pM)^\perp\subset V$ is transverse to $T_pM$ and thus $V$ splits orthogonally as $V=T_pM\oplus \nu_pM$. This yields the orthogonal decomposition $M\times V=TM\oplus \nu M$. Given $(p,X)\in M\times V$, we shall write $X=X^T+X^\perp$ for the decomposition into the tangent and normal parts.

The natural flat connection $D$ on $M\times V$ induces a connection $D^T$ (respectively  $D^\perp$) on $TM$ (resp. $\nu M$), given by $D^T_X Y=(D_X Y)^T$ (resp. $D^\perp_X \eta=(D_X \eta)^\perp$). Here $X,Y$ are vector fields on $M$, while $\eta$ is a section of the normal bundle.

\begin{proposition}
\label{P:equivalentdefinition}
Let $\Omega$ be a $V$-valued one-form on $M$  satisfying condition (a) in Definition \ref{D:genimm}. Then, $\Omega$ is a virtual immersion if and only if the flat connection $D$ on $M\times V$ satisfies $D^T=\nabla$, where $\nabla$ denotes the Levi Civita connection on $M$.
\end{proposition}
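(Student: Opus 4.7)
My plan is to translate everything into the splitting $M\times V = TM \oplus \nu M$ and reduce condition (b) to the vanishing of the torsion of $D^T$, from which the result follows by uniqueness of the Levi-Civita connection.

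First I would set up the key identification. Using the vector bundle homomorphism $(\pi,\Omega):TM\to M\times V$, a tangent vector field $Y$ on $M$ is identified with the $V$-valued function $\Omega(Y)$, so sections of $TM$ become sections of $M\times V$. Under this identification, the flat connection $D$ on $M\times V$ acts by $D_X Y = X(\Omega(Y))$ (directional derivative of a $V$-valued function). From the standard formula for the exterior derivative of a $V$-valued one-form,
\[
d\Omega(X,Y) \;=\; X(\Omega(Y)) - Y(\Omega(X)) - \Omega([X,Y]) \;=\; D_X Y - D_Y X - [X,Y],
\]
where the right-hand side is an element of $V$ at each point.

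Next I would rewrite condition (b). Since the restriction of $\bil{,}$ to $T_pM$ coincides with $g$, hence is non-degenerate, the pairing of $d\Omega(X,Y)\in V$ with all $\Omega(Z)$, $Z\in T_pM$, vanishes if and only if the tangential component $(d\Omega(X,Y))^T$ vanishes. Since $[X,Y]$ is tangent, this tangential component equals
\[
D_X^T Y - D_Y^T X - [X,Y],
\]
which is precisely the torsion tensor of the connection $D^T$ on $TM$. Thus condition (b) is equivalent to the statement that $D^T$ is torsion-free.

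Then I would check that $D^T$ is automatically compatible with $g$. Since $D$ is flat and hence compatible with the metric on $M\times V$, for tangent fields $Y,Z$ and any vector field $X$ on $M$,
\[
X\,g(Y,Z) \;=\; \bil{D_X Y, Z} + \bil{Y, D_X Z} \;=\; g(D_X^T Y, Z) + g(Y, D_X^T Z),
\]
because the normal components of $D_X Y$ and $D_X Z$ pair trivially with the tangent vectors $Z$ and $Y$. So $D^T$ is a metric connection regardless of whether (b) holds.

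Finally, combining these observations, $D^T=\nabla$ iff $D^T$ is torsion-free (by the uniqueness part of the fundamental theorem of Riemannian geometry), and by the previous paragraph this is equivalent to (b). I do not foresee a serious obstacle; the only delicate point is the clean identification of $\Omega(Y)$ with $Y$ as a section of $M\times V$, which makes $d\Omega$ match the torsion of $D^T$ exactly.
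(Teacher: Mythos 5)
Your argument is correct and follows essentially the same route as the paper: identify $d\Omega(X,Y)^T$ with the torsion of $D^T$, observe that condition (a) makes $D^T$ metric, and invoke uniqueness of the Levi--Civita connection. One small wording slip: the metric compatibility of $D$ is not a consequence of flatness but of the fact that $D$ is the directional-derivative connection on the trivial bundle $M\times V$ and $\bil{,}$ is constant on the fibers; the conclusion you draw is nonetheless right.
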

\begin{proof}
Since $\Omega$ already satisfies condition $(a)$, it is a virtual immersion if and only if condition $(b)$ holds as well, that is, $d\Omega(X,Y)^T=0$ for every point $p$ and every $X,Y\in T_pM$. Recall that
\begin{equation}
\label{E:dOmega}
 d\Omega(X,Y)=
D_X Y-D_Y X-[X,Y] \end{equation}
so that taking the tangent part yields
\[  d\Omega(X,Y)^T= D^T_X Y -D^T_Y X - [X,Y] .\]
Condition (a) implies that $D^T$ is compatible with the metric $g$, and by the above formula condition (b) is equivalent to $D^T$ being torsion-free. Since these two properties characterize the Levi Civita connection, the result follows.
\end{proof}

Given a virtual immersion $\Omega:TM\to V$ and a group $\Gamma$ of isometries of $M$, we say that $\Omega$ is $\Gamma$-invariant if for every $\gamma\in \Gamma$, $\Omega\circ d\gamma=\Omega$, where $d\gamma:TM\to TM$ denotes the differential of $M$. The following result is straightforward:

\begin{lemma}
Let $\Omega:TM\to V$ be a virtual immersion, and let $\pi:\tilde M\to M$ denote a covering. Then $\pi^*\Omega=\Omega\circ d\pi:T\tilde M\to V$ is a virtual immersion, which is invariant under the deck group of $\tilde M\to M$. Conversely, if $\Omega:TM\to V$ is invariant under a group $\Gamma$ acting freely on $M$ by isometries, and $\pi: M\to M'=M/\Gamma$ denotes the quotient, then $\Omega$ descends to a virtual immersion $\Omega':TM'\to V$ such that $\Omega=\pi^*\Omega'$.
\end{lemma}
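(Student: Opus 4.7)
The proof is a routine verification of naturality, so my plan is just to check the two conditions of Definition \ref{D:genimm} directly under pullback and descent, together with well-definedness in the quotient case.

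For the forward direction, I would first note that since $\pi:\tilde M\to M$ is a Riemannian covering, at each point $\tilde p\in \tilde M$ the differential $d\pi:T_{\tilde p}\tilde M\to T_{\pi(\tilde p)}M$ is a linear isometry. Condition (a) for $\pi^*\Omega$ then follows immediately from condition (a) for $\Omega$ applied to $d\pi X, d\pi Y$. For condition (b), I would use naturality of the exterior derivative, namely $d(\pi^*\Omega)=\pi^*(d\Omega)$, so that $\langle d(\pi^*\Omega)(X,Y),\pi^*\Omega(Z)\rangle=\langle d\Omega(d\pi X,d\pi Y),\Omega(d\pi Z)\rangle=0$. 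Invariance under the deck group $\Gamma$ is immediate from $\pi\circ\gamma=\pi$, which gives $d\pi\circ d\gamma=d\pi$, hence $\pi^*\Omega\circ d\gamma=\pi^*\Omega$.

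For the converse, let $\pi:M\to M'=M/\Gamma$ be the quotient, where $\Gamma$ acts freely and properly by isometries (so $\pi$ is a Riemannian covering). I would define $\Omega':TM'\to V$ by $\Omega'(X'):=\Omega(X)$, where $X\in T_pM$ is any vector with $d\pi(X)=X'$. The main (and only nontrivial) thing to check is well-definedness: if $X_1\in T_{p_1}M$ and $X_2\in T_{p_2}M$ both project to $X'\in T_{p'}M'$, then $p_2=\gamma\cdot p_1$ for a unique $\gamma\in\Gamma$, and since $d\pi\circ d\gamma=d\pi$ is a bijection on fibers we get $X_2=d\gamma(X_1)$, whence $\Omega(X_2)=\Omega(d\gamma X_1)=\Omega(X_1)$ by $\Gamma$-invariance. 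Smoothness of $\Omega'$ is inherited from $\Omega$ by working in local sections of $\pi$ (which exist since $\pi$ is a covering). By construction $\Omega=\pi^*\Omega'$.

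Finally, I would verify that $\Omega'$ is a virtual immersion on $M'$. This can be done locally: any point of $M'$ has a neighborhood $U'$ over which $\pi$ restricts to an isometry $U\to U'$, and on $U'$ the one-form $\Omega'$ is identified with the restriction of $\Omega$ to $U$, hence satisfies conditions (a) and (b) because $\Omega$ does. There is no real obstacle here; the only point requiring care is the well-definedness step above, and it is precisely the hypothesis of $\Gamma$-invariance that makes it work.
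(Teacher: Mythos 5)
Your verification is correct and is exactly the routine argument the paper has in mind: the paper states this lemma without proof, calling it straightforward, and your check of conditions (a) and (b) under pullback (via the local isometry property of the covering and naturality of $d$), the deck-group invariance from $\pi\circ\gamma=\pi$, and the $\Gamma$-invariance-based well-definedness of the descended form $\Omega'$ supply precisely the omitted details. No gaps.
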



Given a virtual immersion $\Omega:TM\to V$ and a linear isometric immersion $\iota:V\to W$, there is an induced virtual immersion $\iota\circ \Omega:TM\to W$. We want to rule out these trivial extensions.
\begin{definition}
A virtual immersion $\Omega:TM\to V$ is called \emph{full} if the image of $\Omega$ spans $V$.
\end{definition}

For any virtual immersion $\Omega:TM\to W$, defining the subspace $V=\textrm{span}(\Omega(TM))$ and letting $\iota:V\to W$ denote the inclusion, one obtains the following:

\begin{lemma}
Given any virtual immersion $\Omega:TM\to W$ there exists a full immersion $\Omega':TM\to V$ and a linear isometric immersion $\iota:V\to W$ such that $\Omega=\iota\circ \Omega'$.
\end{lemma}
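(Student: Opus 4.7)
The plan is to carry out verbatim the construction already outlined in the sentence immediately preceding the statement: set $V := \mathrm{span}(\Omega(TM)) \subset W$, equip it with the bilinear form inherited from $W$ by restriction, and let $\iota : V \hookrightarrow W$ be the inclusion. Define $\Omega' : TM \to V$ by $\Omega'(X) := \Omega(X)$, which makes sense because $\Omega(X) \in V$ by the very definition of $V$. Then $\Omega = \iota \circ \Omega'$ holds tautologically, and fullness of $\Omega'$ is immediate since by construction the image spans $V$.

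The next step is to verify that $(V, \bil{,}|_V)$ is pseudo-Euclidean, i.e.\ the restricted form is non-degenerate. This is the main content of the argument. I would proceed by contradiction: suppose $v \in V$ satisfies $\bil{v, v'} = 0$ for all $v' \in V$. Since $T_pM \subset V$ for every $p$, this forces $v \in \bigcap_{p \in M} \nu_p M$, so $v$ is simultaneously a constant section of $M \times W$ and a section of the normal bundle $\nu M$. From $D v = 0$ and the decomposition $D_X v = -A_v X + D^\perp_X v$, one deduces $A_v = 0$ and $D^\perp v = 0$. I would then combine these constraints with the fact that $v$ is a finite linear combination $v = \sum_i c_i \Omega_{p_i}(X_i)$ and exploit that any such combination can be written, via iterated use of \eqref{E:dOmega}, in terms of $T_pM$ together with iterated covariant derivatives of the second fundamental form at a single base point, to force $v = 0$.

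Once $V$ is known to be pseudo-Euclidean, the verification that $\iota$ is a linear isometric immersion is built into the definition of the form on $V$. It remains to check that $\Omega'$ satisfies (a) and (b) of Definition~\ref{D:genimm}. Condition (a) transfers verbatim because the form on $V$ is the restriction: $\bil{\Omega'(X), \Omega'(Y)}_V = \bil{\Omega(X), \Omega(Y)}_W = g(X,Y)$. For (b), I would first note that $d\Omega(X,Y) \in V$: writing $d\Omega(X,Y) = D_X Y - D_Y X - [X,Y]$ via \eqref{E:dOmega} (with $TM$ identified with its image under $\Omega$), the first two summands are directional derivatives of $V$-valued sections of $M \times W$, hence themselves take values in $V$, while $[X,Y] \in TM \subset V$. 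Consequently $d\Omega'(X,Y) = d\Omega(X,Y)$ viewed in $V$, and condition (b) for $\Omega'$ is inherited from the corresponding property of $\Omega$.

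The main obstacle, as indicated above, is the non-degeneracy check for the restricted form on $V$; everything else is bookkeeping that flows from restricting the target. I expect this step to occupy the bulk of the proof, whereas the remaining verifications are essentially formal.
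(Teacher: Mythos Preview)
Your construction is exactly the one the paper sketches in the sentence preceding the lemma, and the paper offers nothing further by way of proof; so at the level of the construction you match the paper. You are also right to single out non-degeneracy of the restricted form on $V=\operatorname{span}(\Omega(TM))$ as the only substantive point --- the paper simply passes over it.

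However, your proposed argument for non-degeneracy cannot be completed, because the claim is false in general. Take $M=\R$ with the standard metric, $W=\R^3$ with the form $\bil{(a,b,c),(a',b',c')}=aa'+bb'-cc'$, and set $\Omega(\partial_t)=(1,f(t),f(t))$ for any smooth non-constant $f$. Condition~(a) holds since $1+f(t)^2-f(t)^2=1$, and condition~(b) is vacuous on a one-manifold, so $\Omega$ is a virtual immersion. But here $V=\operatorname{span}\{(1,0,0),(0,1,1)\}$, and the restricted form has Gram matrix $\left(\begin{smallmatrix}1&0\\0&0\end{smallmatrix}\right)$: the vector $v=(0,1,1)$ lies in the radical of $V$. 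In this example one checks directly that your derived constraints $S_v=0$ and $D^\perp v=0$ hold (indeed $\II(\partial_t,\partial_t)=(0,f',f')$ is orthogonal to $v$), yet $v\neq 0$. So the step where you ``combine these constraints'' with the expression of $v$ as a finite sum $\sum_i c_i\,\Omega_{p_i}(X_i)$ cannot succeed. The lemma as stated is not true without additional hypotheses; the paper's one-line argument shares the same gap.
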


Given a virtual immersion, one can define a second fundamental form and shape operator.

\begin{definition}
Let $\Omega$ be a $V$-valued virtual immersion, $X,Y$ be smooth vector fields on $M$, and $\eta$ a smooth section of $\nu M$. Define the \emph{second fundamental form} of $\Omega$ by
\[ \II:TM\times TM\to \nu M,\qquad\II(X,Y)=
(D_X Y)^\perp=D_X(\Omega(Y))-\Omega(\nabla_XY)
 \]
 and the \emph{shape operator} in the direction of a normal vector $\eta$ by
 \[
 S_\nu:TM\to TM,\qquad S_\eta(X)=-(D_X \eta)^T.
 \]
\end{definition}
Note that the second fundamental form and the shape operator are tensors. In view of Proposition \ref{P:equivalentdefinition}, we may write
\begin{align}
D_X Y &=\nabla_X Y+ \II(X,Y) \\
D_X \eta &= -S_\eta X +D^\perp_X \eta
\end{align}

\begin{example}
Given a Riemannian manifold $M$, let $\phi: M\to V$ be an isometric immersion into a pseudo-Euclidean vector space $(V,\bil{,})$. Then $\Omega=d\phi:TM\to V$ is a virtual immersion, with symmetric second fundamental form. On the other hand, for any virtual immersion $\Omega$, the normal part of $d\Omega(X,Y)$ equals $\II(X,Y) -\II(Y,X)$ and, since the tangent part of $d\Omega$ vanishes, it follows that if $\II$ is symmetric then $d\Omega=0$, which implies that locally $\Omega=d\phi$ for some map $\phi:M\to V$. By condition $(a)$ in the definition of virtual immersion, this map must be an isometric immersion.
\end{example}

\begin{proposition}
\label{P:fundamental}
Let $\Omega$ be a virtual immersion of the Riemannian manifold $(M,g)$ with values in $V$. Then the following identities hold:
\begin{enumerate}[a)]
\item Weingarten's equation
 \[ \left< S_\eta(X), Y \right>= \left< \II(X,Y),\eta \right> \]
\item Gauss' equation
\[R(X,Y,Z,W)=\left<\II(Y,W),\II(X,Z)\right>-\left<\II(X,W),\II(Y,Z)\right>\]
\item Ricci's equation
\[  \left< R^\perp(X,Y)\eta,\zeta\right>=-\left<(S_\eta^tS_\zeta -S_\zeta^t S_\eta)X,Y\right>\]
\item Codazzi's equation
\[ \left<(D_X \II) (Y,Z) ,\eta \right>= \left<(D_Y \II) (X,Z) ,\eta \right>. \]
\end{enumerate}
\end{proposition}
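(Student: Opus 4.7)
The overarching plan is to derive (b), (c), (d) from the flatness of the connection $D$ on $M\times V$: denoting by $\tilde{R}$ its curvature tensor (with the paper's sign convention), one has $\tilde{R}\equiv 0$, and the three equations correspond to projecting $\tilde{R}(X,Y)Z=0$ and $\tilde{R}(X,Y)\eta=0$ onto the tangent and normal summands of the splitting $M\times V=TM\oplus \nu M$. Equation (a) I would establish independently, as a warm-up, by differentiating the identity $\langle Y,\eta\rangle=0$ along $X$, using that $D$ is metric-compatible, and expanding via $D_X Y=\nabla_X Y+\II(X,Y)$ and $D_X\eta=-S_\eta X+D^\perp_X\eta$; the normal and tangent components separate immediately to yield Weingarten.

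For (b) and (d) I would expand $\tilde{R}(X,Y)Z$ for $Z$ tangent, substituting the decomposition formulas at each layer of $D$. The result naturally splits into four groups: one that reconstructs $R(X,Y)Z$, one of shape-operator terms, one of $\II$ composed with $\nabla$ and the Lie bracket, and one of $D^\perp$-derivatives of $\II$. Setting the whole expression to zero and taking the tangent part yields $R(X,Y)Z=S_{\II(X,Z)}Y-S_{\II(Y,Z)}X$; pairing with $W$ and invoking (a) twice produces Gauss. The normal part, once the combination is recognized as $(D_X\II)(Y,Z)-(D_Y\II)(X,Z)$ via the tensorial extension of $D^\perp$ and a cancellation using $\nabla_X Y-\nabla_Y X=[X,Y]$, becomes Codazzi. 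For (c) I would repeat the calculation with a normal field $\eta$ in place of $Z$: the normal part reduces to $R^\perp(X,Y)\eta=\II(Y,S_\eta X)-\II(X,S_\eta Y)$, and pairing with $\zeta$, applying Weingarten twice, and then moving one operator across the inner product as a transpose, yields the stated formula involving $S_\eta^t S_\zeta-S_\zeta^t S_\eta$.

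The main subtlety throughout is precisely the non-symmetry of $\II$: in the classical isometric-immersion setting one may symmetrize freely, so shape operators are self-adjoint and Ricci's equation collapses to the commutator $[S_\eta,S_\zeta]$. Here the transpose in (c) is the visible trace of this asymmetry, and in every step one must track the order of arguments of $\II$ carefully and avoid any manipulation that implicitly uses $\II(X,Y)=\II(Y,X)$ or $\langle S_\eta X, Y\rangle=\langle X, S_\eta Y\rangle$. Beyond this bookkeeping, the computation is a direct expansion and comparison of tangent/normal components.
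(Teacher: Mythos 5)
Your proposal is correct and follows essentially the same route as the paper: all four identities are read off from the vanishing curvature of the flat connection $D$ on $M\times V$ by separating tangent and normal components, with Weingarten obtained directly from metric compatibility of $D$. The only cosmetic difference is that you first isolate the vector-valued tangent part $R(X,Y)Z=S_{\II(X,Z)}Y-S_{\II(Y,Z)}X$ and then pair with $W$ via Weingarten, whereas the paper pairs the full identity with $W$ and uses metric compatibility of $D$ to move the derivative across; these are equivalent.
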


\begin{proof}
The proof is the same as in classical case. For sake of completeness, we recall it here.

Fix a point $p$ and let $V=T_pM\oplus \nu_pM$ be the orthogonal splitting into tangent and normal part. Recall that this is possible even though $(V,\bil{,})$ is not Euclidean, because the restriction to $T_pM$ is positive definite. Given vectors $X,Y,Z,W\in T_pM$, extend them locally to vector fields (denoted with the same letters). Differentiating the equation $D_Y Z =\nabla_Y Z+ \II(Y,Z)$ with respect to $X$, one gets
\begin{align*}
D_XD_Y Z =& D_X\left(\nabla_Y Z+ \II(Y,Z)\right)\\
		=&\nabla_X\nabla_Y Z+ \II(X,\nabla_YZ)+D_X(\II(Y,Z)).
\end{align*}
Since the connection $D$ is flat, its curvature vanishes, and one has
\begin{align}
0=&D_{[X,Y]}Z - D_XD_YZ+D_YD_XZ \label{E:GaussCodazzi}\\
=&\big(\nabla_{[X,Y]}Z+\II([X,Y], Z)\big)- \big(\nabla_X\nabla_Y Z+ \II(X,\nabla_YZ)+D_X(\II(Y,Z))\big)\nonumber\\
&+\big(\nabla_Y\nabla_X Z+ \II(Y,\nabla_XZ)+D_Y(\II(X,Z))\big)\nonumber\\
=&R(X,Y)Z- (D_X\II)(Y,Z)+(D_Y\II)(X,Z).\nonumber
\end{align}
Taking the product of both sides of \eqref{E:GaussCodazzi} with $W\in T_pM$, one gets
\begin{align*}
0=&\bil{R(X,Y)Z,W}-\bil{D_X(\II(Y,Z)),W}+\bil{D_Y(\II(X,Z)),W}\\
=&\bil{R(X,Y)Z,W}+\bil{\II(Y,Z),D_XW}-\bil{\II(X,Z),D_YW}\\
=&\bil{R(X,Y)Z,W}+\bil{\II(Y,Z),\II(X,W)}-\bil{\II(X,Z),\II(Y,W)}
\end{align*}
which recovers the Gauss' equation.

On the other hand, taking the product of equation \eqref{E:GaussCodazzi} with $\eta\in \nu_pM$, one obtains
\[
0=\bil{-(D_X\II)(Y,Z)+(D_Y\II)(X,Z),\eta}
\]
which is Codazzi Equation.

Ricci equation is obtained similarly, but starting with equation $D_X \eta = -S_\eta X +D^\perp_X \eta$ instead of $D_X Y =\nabla_X Y+ \II(X,Y)$. Weingarten equation is immediate.

\end{proof}

\section{Virtual immersions on symmetric spaces}\label{S:virtual-imm}

This section is devoted to proving the first part of the main theorem. Namely, given a symmetric space $M$, we show how to produce a virtual immersion $\Omega:TM\to V$ with skew-symmetric second fundamental form.

Since the universal cover $\tilde{M}$ of $M$ is a simply connected symmetric space, by the de Rham decomposition theorem it splits isometrically into irreducible factors, $\tilde{M}=\prod_{i=0}^k\tilde{M}_i$, where $\tilde{M}_0=\R^r$ and none of the other factors is Euclidean. For each $i=0,\ldots k$, choose $p_i\in \tilde{M}_i$, and let $G_i$ be the subgroup of the isometry group of $M$, generated by transvections (i.e. products of two reflections). Then $G_i$ is connected and, by the standard theory of symmetric spaces, it acts transitively on $\tilde{M}_i$. Moreover, $(G_i,H_i)$ is a symmetric pair, where $H_i=(G_i)_{p_i}$. Notice that $G_0=\R^r$, and $H_0=1$.

Let $\pi_i:G_i\to \tilde{M}_i=G_i/H_i$ denote the projection $\pi_i(g):=g\cdot p_i$. Let $\g_i, \h_i$ denote the Lie algebras of $G_i$, $H_i$ respectively, and let $\m_i\subset \g_i$ be a complement of $\h_i$ satisfying $[\m_i,\m_i]\subseteq \h_i$, $[\m_i,\h_i]\subseteq \h_i$. Then the Killing form $B_i$ on $\g_i$ restricts to a negative-definite (resp. positive-definite, zero) symmetric form on $\m_i$ when $\tilde{M}_i$ is of compact (resp. non-compact, Euclidean) type. Moreover, $\m_i$ can be canonically identified with $T_{p_i}\tilde{M}_i$ via $(\pi_i)_*$ and, for $i>0$, the restriction $g_{\tilde{M}}|_{\tilde{M}_i}$ of the metric $g_{\tilde M}$ to $T_{p_i}\tilde{M}_i$ corresponds to $\lambda_i B_i\big|_{\m_i}$ for some negative (resp. positive) value $\lambda_i\in \R$ if $\tilde{M}_i$ is of compact (resp. non-compact) type.

Letting $G=\prod_{i=0}^k G_i$ and $H=\prod_{i=0}^k H_i$, then $(G,H)$ is a symmetric pair, with $G$ acting transitively on $\tilde{M}$ and such that $H=G_{p}$, $p=(p_0,\ldots p_k)$. In particular, $\tilde{M}$ is diffeomorphic to $G/H$, via the map sending $\llbracket g \rrbracket=\llbracket g_0,\ldots g_k \rrbracket \in G/H$ to $g\cdot p=(g_0\cdot p_0,\ldots g_k\cdot p_k)$. Let
\[
\g=\bigoplus_{i=0}^k\g_i,\quad \h=\bigoplus_{i=0}^k\h_i,\quad \m=\bigoplus_{i=0}^k\m_i,
\]
so that $\g=\h\oplus \m$, $[\m,\m]\subseteq \h$ and $[\m, \h]\subseteq \m$. Define $G\times_{H} \m$ as the quotient of $G\times\m$ by the action of $H$ given by $h\cdot (g,X)=(gh^{-1}, \Ad_h X)$, and denote by $\llbracket g,X \rrbracket$ the image of $(g,X)\in G\times\m$ under the quotient map. There is a natural $G$-action on $G\times_{H} \m$, defined by $g'\cdot \llbracket g,X \rrbracket=\llbracket g'g,X \rrbracket$. Extend now the isomorphism
\[
\m=\bigoplus_{i=0}^k\m_i\to \bigoplus_{i=0}^k T_{p_i} \tilde{M}_i=T_{p}\tilde{M}
\]
to the $G$-equivariant bundle isomorphism $G\times_H \m\to T\tilde{M}$ given by $\llbracket g,X\rrbracket\mapsto dg(X)$.

We can now define the virtual immersion $\tilde{\Omega}_0$ on $\tilde{M}$. Endow $\g=\R^r\oplus \bigoplus_{i=1}^k\g_i$ with the nondegenerate symmetric bilinear form
\[
\bil{\,,\,}=g_{\tilde{M}}|_{\R^r}\oplus \bigoplus_{i=1}^k \lambda_i B_i,
\]
and define
\begin{align}
\label{E:naturalgenimm}
\tilde\Omega_0: T\tilde{M}\simeq G\times_{H}\m&\longrightarrow\,\g\\
\llbracket g, X\rrbracket &\longmapsto\,\, \Ad_{g}X \nonumber
\end{align}

\begin{lemma}
\label{L:Omega0}
The $\g$-valued one-form $\tilde{\Omega}_0$ defined in Equation \eqref{E:naturalgenimm} is a virtual immersion. At $\llbracket g\rrbracket \in \tilde{M}$, the tangent and normal spaces are $\Ad_g\m$ and $\Ad_g \h$, respectively. The second fundamental form is skew symmetric, given by 
\[ \II\big(\llbracket g,X\rrbracket,\llbracket g,Y\rrbracket\big)=\Ad_g([X,Y]). \]
\end{lemma}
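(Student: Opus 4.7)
\noindent\emph{Proof plan.}
The whole argument will be driven by the fact that $\tilde\Omega_0$ is $G$-equivariant: $\tilde\Omega_0\circ dg = \Ad_g\circ \tilde\Omega_0$ for every $g\in G$. This is immediate from the definition, since $dg$ on $T\tilde M\cong G\times_H \m$ sends $\llbracket g',X\rrbracket$ to $\llbracket gg',X\rrbracket$. Because $G$ acts by isometries on $(\tilde M,g_{\tilde M})$ and $\Ad_G$ acts by linear isometries on $(\g,\bil{,})$ (by $\Ad$-invariance of each Killing form $B_i$ and triviality of $\Ad_{G_0}$), it suffices to verify conditions (a) and (b) of Definition \ref{D:genimm}, identify the normal bundle, and compute $\II$ at the single point $p\in\tilde M$.

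At $p$ the restriction of $\tilde\Omega_0$ is the inclusion $\m\hookrightarrow \g$, so condition (a) is forced by the very definition of $\bil{,}$. The orthogonal complement of $\m$ in $\g$ is $\h$: on each non-flat factor this is the standard identity $B_i(\m_i,\h_i)=0$, a consequence of $\Ad_{H_i}$-invariance of $B_i$ together with the decomposition $\g_i=\h_i\oplus\m_i$; on the flat factor $\h_0=0$. Equivariance then yields $T_{gp}\tilde M=\Ad_g\m$ and $\nu_{gp}\tilde M=\Ad_g\h$.

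For the computation of $d\tilde\Omega_0|_p$, I extend $X,Y\in\m$ to Killing vector fields $X^*,Y^*$ on $\tilde M$ generated by $\exp(tX),\exp(tY)\subset G$, so that $X^*_p=X$, $Y^*_p=Y$. To describe $Y^*$ along the orbit $t\mapsto\exp(tX)p$ I rewrite $\exp(sY)\exp(tX)=\exp(tX)\cdot\bigl[\exp(-tX)\exp(sY)\exp(tX)\bigr]$ and differentiate in $s$: one finds that $Y^*_{\exp(tX)p}$ equals the image under $d\exp(tX)$ of the $\m$-projection $(\Ad_{\exp(-tX)}Y)_\m$, so
\[
\tilde\Omega_0(Y^*_{\exp(tX)p})=\Ad_{\exp(tX)}\bigl((\Ad_{\exp(-tX)}Y)_\m\bigr).
\]
A first-order expansion in $t$, using $[\m,\m]\subseteq\h$ to kill the linear term of the projection, gives $X^*(\tilde\Omega_0(Y^*))|_p=[X,Y]$. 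Since $D$ is ordinary differentiation of $\g$-valued functions, $D_{X^*}\tilde\Omega_0(Y^*)|_p=[X,Y]\in\h$; decomposing this into tangent and normal parts yields $\nabla_{X^*}Y^*|_p=0$ and $\II(X,Y)|_p=[X,Y]$. In particular the tangent part of $d\tilde\Omega_0$ vanishes at $p$, hence everywhere by equivariance, establishing condition (b); $\II$ is skew-symmetric because $[\,,\,]$ is; and equivariance extends the formula to $\II(\llbracket g,X\rrbracket,\llbracket g,Y\rrbracket)=\Ad_g[X,Y]$ at every point. The main obstacle is the Killing-field computation above: the subtlety is that $Y^*_{\exp(tX)p}$ is \emph{not} represented by $\llbracket\exp(tX),Y\rrbracket$ but requires the $\m$-projection of $\Ad_{\exp(-tX)}Y$, and it is precisely the symmetric-space condition $[\m,\m]\subseteq\h$ that makes the first-order expansion line up to produce $[X,Y]$.
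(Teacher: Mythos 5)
Your proof follows essentially the same route as the paper's: reduce everything to the base point via the $G$-equivariance $\tilde\Omega_0\circ dg=\Ad_g\circ\tilde\Omega_0$, verify condition (a) there from the very construction of $\bil{,}$, identify the normal space as $\Ad_g\h$ using $B_i(\m_i,\h_i)=0$, and compute $D_{X^*}\tilde\Omega_0(Y^*)|_p=[X,Y]$ from the action-field formula $Y^*\llbracket g\rrbracket=\llbracket g,(\Ad_{g^{-1}}Y)_\m\rrbracket$. The paper does this last computation at general $g$ and then sets $g=e$, while you expand along $\exp(tX)p$ to first order; this is the same calculation, and your identification of the $\m$-projection as the key subtlety is exactly right.

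The one place where your argument, as written, is incomplete is the passage to condition (b). You infer that the tangent part of $d\tilde\Omega_0$ vanishes at $p$ from the fact that $D_{X^*}\tilde\Omega_0(Y^*)|_p$ and $D_{Y^*}\tilde\Omega_0(X^*)|_p$ lie in $\h$, but $d\tilde\Omega_0(X^*,Y^*)=D_{X^*}\tilde\Omega_0(Y^*)-D_{Y^*}\tilde\Omega_0(X^*)-\tilde\Omega_0([X^*,Y^*])$, and the third term is tangent by construction, so it must separately be shown to vanish at $p$. It does: $[X^*,Y^*]=-[X,Y]^*$, and since $[X,Y]\in\h$ its action field vanishes at the base point (its $\m$-component is zero), which is precisely the third zero in the paper's ``$0-0-0=0$''. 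Relatedly, calling the tangent part of $D_{X^*}\tilde\Omega_0(Y^*)$ ``$\nabla_{X^*}Y^*$'' before (b) is established is slightly premature (a priori it is only $D^T$), though harmless once the bracket term is handled. With that one line added, your proof is complete and coincides with the paper's.
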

\begin{proof}
We  begin by showing that condition a) in the definition of virtual immersion holds for $\tilde{\Omega}_0$. By $G$-equivariance it is enough to show that
\[
\tilde{\Omega}_0|_{\llbracket e\rrbracket\times \m}: \llbracket e\rrbracket\times \m\to \g
\]
is an isometric embedding. The embedding is simply the canonical inclusion, therefore given $X,Y\in \m\simeq T_{\llbracket e\rrbracket}\tilde{M}$, and denoting $X_i, Y_i$ the projections of $X,Y$ onto $\m_i\simeq T_{p_i}\tilde{M}_i$, one has
\begin{align*}
\bil{\tilde{\Omega}_0(X),\tilde{\Omega}_0(Y)}=&\bil{X,Y}\\
			=&\bil{X_0,Y_0}+\sum_{i=1}^k \bil{X_i,Y_i}\\
			=&g_{\tilde{M}}(X_0, Y_0)+\sum_{i=1}^k\lambda_i B_i(X_i, Y_i)\\
			=&g_{\tilde{M}}(X_0,Y_0)+\sum_{i=1}^kg_{\tilde{M}}(X_i,Y_i)\\
			=&g_{\tilde{M}}(X,Y).
\end{align*}

It is clear from \eqref{E:naturalgenimm} that the tangent space is $\Ad_g\m$, thus the normal space must be $\Ad_g \h$. 

Let $X\in\g$. Under the identification of $T\tilde{M}$ with $G\times_H \m$ that we are using, the action field $X^*$ is given by 
\[
X^*\llbracket g \rrbracket= \llbracket g,(\Ad_{g^{-1}}X)_\m \rrbracket
.\]
Indeed, $X^*\llbracket g \rrbracket$ is a vector of the form $\llbracket g, v \rrbracket$, with $v=dg^{-1}(X^*\llbracket g \rrbracket)\in \m$. One computes
\begin{align*}
 v= dg^{-1} \left(\left. \frac{d}{dt}\right|_{t=0} \llbracket e^{tX} g \rrbracket \right)& =\left. \frac{d}{dt}\right|_{t=0}\llbracket g^{-1} e^{tX} g\rrbracket\\
& =d\pi_e(\Ad_{g^{-1}}X) \\
&=(\Ad_{g^{-1}}X)_\m,
\end{align*}
where $\pi$ denotes the map $\pi:G\to G/H$.
Given $X,Y\in\g$, we then have
\begin{align*}  
D_{X^*}\tilde{\Omega}_0(Y^*)&=\left.\frac{d}{dt}\right|_{t=0}\tilde{\Omega}_0\llbracket e^{tX}g,(\Ad_{(e^{tX}g)^{-1}}Y)_\m\rrbracket\\
&=\left.\frac{d}{dt}\right|_{t=0} \Ad_{e^{tX}g}(\Ad_{g^{-1}e^{-tX}}Y)_\m)\\
&=\Ad_g\big([\Ad_{g^{-1}}X,(\Ad_{g^{-1}}Y)_\m]  - (\Ad_{g^{-1}}[X,Y])_\m   \big)
\end{align*}

By $G$-equivariance, it is enough to show that,  for every $X,Y\in T_{\llbracket e\rrbracket}\tilde{M}\simeq \m$, we have $d\tilde{\Omega}_0(X^*,Y^*)^T_{ \llbracket e\rrbracket}=0$ and $\II(X,Y)_{ \llbracket e\rrbracket}=[X,Y]$. Plugging $g=e$ in the equation above, and using the fact that $[\m,\m]\subset\h$, we have
\[ D_{X^*}\tilde{\Omega}_0(Y^*)= [X,Y]. \]
The tangent part of this is zero, so that
\[d\tilde{\Omega}_0(X^*,Y^*)^T_{ \llbracket e\rrbracket}= D_{X^*}\tilde{\Omega}_0(Y^*)^T_{ \llbracket e\rrbracket} - D_{Y^*}\tilde{\Omega}_0(X^*)^T_{ \llbracket e\rrbracket} - \tilde{\Omega}_0([X^*,Y^*])_{ \llbracket e\rrbracket}= 0-0-0=0\]
which means that $\tilde{\Omega}_0$ is a virtual immersion.

Moreover, 
 $\II(X,Y)_{ \llbracket e\rrbracket}=D_{X^*}\tilde{\Omega}_0(Y^*)^\perp_{ \llbracket e\rrbracket}= [X,Y]$.
 
\end{proof}

Using the lemma above, we can prove
\begin{lemma}\label{L:full}
The virtual immersion $\tilde{\Omega}_0:T\tilde{M}\to \g$ is full.
\end{lemma}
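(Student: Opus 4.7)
My plan is to show that $W := \textrm{span}(\tilde{\Omega}_0(T\tilde{M}))$ is an $\Ad_G$-invariant subspace of $\g$ containing $\m$, whence it must equal $\g$ by the transvection-group structure of $G$.

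First, the restriction of $\tilde{\Omega}_0$ to the fiber over $\llbracket e\rrbracket$ is the canonical inclusion $\m \hookrightarrow \g$, so $\m \subseteq W$. The image of $\tilde{\Omega}_0$ consists of vectors of the form $\Ad_g X$ with $g\in G$ and $X\in\m$, and for any $g'\in G$ one has $\Ad_{g'}(\Ad_g X)=\Ad_{g'g}X$, which is again of this form. Hence the image, and therefore $W$, is $\Ad_G$-invariant. Since each $G_i$ is connected, so is $G$, and differentiating the invariance yields $[Z,W]\subseteq W$ for every $Z\in\g$, i.e.\ $W$ is an ideal in $\g$.

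It then suffices to verify that the ideal of $\g$ generated by $\m$ equals $\g$. For this I would invoke the defining property that $G_i$ is the transvection group: its Lie algebra satisfies $\g_i=\m_i+[\m_i,\m_i]$, and combined with the decomposition $\g_i=\h_i\oplus\m_i$ this forces $\h_i=[\m_i,\m_i]$ for each $i\ge 1$ (while $\h_0=0$ directly). Hence $\h=\bigoplus_i\h_i\subseteq[\m,\m]\subseteq W$, and together with $\m\subseteq W$ this yields $W\supseteq\m+\h=\g$, proving fullness.

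The only non-routine ingredient here is the identification $\h_i=[\m_i,\m_i]$, which is exactly why the choice of $G_i$ as the transvection group (rather than the full isometry group of $\tilde{M}_i$) matters: for a larger group the isotropy Lie algebra could strictly contain $[\m_i,\m_i]$, and $W$ would then fail to cover all of $\h_i$. Everything else is formal manipulation with $\Ad$ and the fact that $G$-invariant subspaces of a $G$-representation are $\ad_\g$-invariant when $G$ is connected.
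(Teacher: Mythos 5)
Your proof is correct, and it reaches the same reduction as the paper --- fullness boils down to $\g=\m+[\m,\m]$, i.e.\ $\h=[\m,\m]$ --- but it justifies the two halves differently. Your way of getting $[\m,\m]\subseteq W$ is via $\Ad_G$-invariance of the span of the image (hence $W$ is an ideal, since $G$ is connected), which is clean and arguably more elementary than the paper's route, where $[\m,\m]$ enters as the span of the second fundamental form values $\II(X,Y)=[X,Y]$ from Lemma \ref{L:Omega0} (together with the implicit fact that values of $\II$ lie in the span of the image of $\tilde\Omega_0$). The real divergence is how $\h_i=[\m_i,\m_i]$ is established: you cite the structure theory of the transvection group, namely that its Lie algebra is $\m_i+[\m_i,\m_i]$. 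That fact is true and standard, but it is a theorem, not the ``defining property'' --- the definition is generation by products of two symmetries, and extracting the Lie-algebra description takes an argument --- so as written you are outsourcing exactly the crux. The paper instead proves the identity directly and self-containedly: if $H\in\h$ is Killing-orthogonal to $[\m,\m]$, then $\Ad$-invariance of $B$ and definiteness of $B$ on each $\m_i$, $i\ge 1$, force $[H,\m]=0$, and faithfulness of the isotropy representation gives $H=0$; this uses semisimplicity of the non-Euclidean factors and effectiveness of the action rather than the transvection-algebra theorem. Both routes are legitimate; if you keep yours, phrase the transvection-group fact as a citation (e.g.\ to standard symmetric-space references) rather than as a definition, and your treatment of the Euclidean factor ($\h_0=0$) is the right supplement.
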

\begin{proof}
It is enough to prove that $\tilde{\Omega}_0(T_{\tilde{p}}\tilde M)\oplus \textrm{span}\{\II(X,Y)\mid X, Y\in T_{\llbracket e \rrbracket}\tilde M\}=\g$. By  Lemma \ref{L:Omega0},
\[
\tilde{\Omega}_0(T_{\tilde{p}}\tilde M)=\m, \qquad \textrm{span}\{\II(X,Y)\mid X, Y\in T_{\tilde{p}}\tilde M\}=[\m,\m],
\]
therefore this reduces to proving $[\m,\m]=\h$. If not, then there exists a nonzero $H\in \h$ such that $B(H,[X,Y])=0$ for all $X,Y$ in $\m$. By $\Ad$-invariance of the Killing form, this implies $B([H,Y],X)=0$ for all $X,Y\in \m$. Since $[H,Y]\in \bigoplus_{i=1}^t\m_i$ and $B$ is nondegenerate on $\bigoplus_{i=1}^r\g_i$, it follows that $[H,Y]=0$ for every $Y\in \m$. This implies that $\Ad(\exp tH)\in H=G_{\tilde{p}}$ is the identity on $\m=T_{\tilde{p}}\tilde{M}$, which implies $H=0$ hence the contradiction.
\end{proof}

Having defined the virtual immersion $\tilde{\Omega}_0$ on $\tilde{M}$, the goal is now to prove that it descends to a virtual immersion on $M$. This is equivalent to proving that $\tilde{\Omega}_0$ is invariant under the group $\Gamma$ of deck transformations of $\tilde{M}\to M$.

\begin{lemma}\label{L:symm-and-invariance}
Let $\Gamma$ be a discrete subgroup of isometries of $\tilde{M}$ acting freely on $\tilde{M}$. Then the virtual immersion $\tilde{\Omega}_0$ defined above is invariant under $\Gamma$ if and only if $M=\tilde{M}/\Gamma$ is a symmetric space.
\end{lemma}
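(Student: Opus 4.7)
The plan is to prove both directions through the intermediate characterization $\Gamma\subseteq Z_{\isom(\tilde M)}(G)$. Specifically, $\tilde\Omega_0$-invariance will be shown equivalent to each $\gamma \in \Gamma$ centralizing $G$, and this centralization condition will in turn be shown equivalent to $M$ being symmetric.

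For the direction ``$M$ symmetric $\Rightarrow \tilde\Omega_0$ is $\Gamma$-invariant'', symmetry of $M$ implies every geodesic symmetry $s_{\tilde p}$ descends to $M$, so $s_{\tilde p}\Gamma s_{\tilde p}^{-1} = \Gamma$ for all $\tilde p$. Fixing $\gamma \in \Gamma$, the map $\tilde p \mapsto s_{\tilde p}\gamma s_{\tilde p}^{-1}$ is continuous from connected $\tilde M$ into discrete $\Gamma$, hence constant; equating values at distinct $\tilde p_1, \tilde p_2$ and rearranging yields $(s_{\tilde p_2}s_{\tilde p_1})\gamma = \gamma(s_{\tilde p_2}s_{\tilde p_1})$, so $\gamma$ commutes with all transvections and hence with all of $G$. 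Since $\gamma$ then centralizes $H = G_p$, the point $\gamma(p)$ is $H$-fixed and thus equals $g_0\cdot p$ for some $g_0\in N_G(H)\subseteq N_G(\m)$ (the second inclusion via $\Ad$-invariance of the Killing form). Centralization also yields $d\gamma(X^*_p)=X^*_{g_0 p}$ for every $X\in\g$. Combining this with the formula $\tilde\Omega_0(X^*_{\llbracket g\rrbracket}) = \Ad_g(\Ad_{g^{-1}}X)_\m$ derived in the proof of Lemma \ref{L:Omega0}, and using $g_0\in N_G(\m)$, one computes $\tilde\Omega_0(d\gamma(X^*_p)) = X_\m = \tilde\Omega_0(X^*_p)$; the $G$-equivariance $\tilde\Omega_0\circ dg = \Ad_g\circ\tilde\Omega_0$ then propagates this invariance from $p$ to every point.

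For the converse, $\tilde\Omega_0$-invariance at every point, together with injectivity of $\tilde\Omega_0$ on each tangent space, forces $d\gamma(X^*_{\tilde p}) = X^*_{\gamma(\tilde p)}$ for every $X\in\g$, so $\gamma$ commutes with each $\exp(tX)$ and therefore centralizes $G$. To conclude that $M$ is symmetric, I invoke two standard facts, both verified factor-wise in the de Rham decomposition: the centralizer $Z_{\isom(\tilde M)}(G)$ is abelian (equal to $\R^r$ on the Euclidean factor and a subgroup of the finite center of $G_i$ on each irreducible non-Euclidean factor), and the Cartan involution $\sigma$ at $p$ restricts to inversion $\gamma\mapsto\gamma^{-1}$ on it. These give $s_p\gamma s_p^{-1}=\gamma^{-1}\in\Gamma$. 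For any other base point $\tilde p=g\cdot p$ one has $s_{\tilde p}=gs_p g^{-1}$, and the commutation $\gamma g=g\gamma$ then yields $s_{\tilde p}\gamma s_{\tilde p}^{-1}=\gamma^{-1}\in\Gamma$ as well, so every geodesic symmetry of $\tilde M$ descends to $M$ and $M$ is symmetric.

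The most delicate step is the converse's first claim: deducing that $\gamma$ commutes with the entire transvection group $G$ from $\tilde\Omega_0$-invariance. At a single point, invariance only controls $d\gamma$ on $T_p\tilde M\cong\m$, so one must apply invariance at every $\tilde p\in\tilde M$ and use the fullness of $\tilde\Omega_0$ from Lemma \ref{L:full} to upgrade the pointwise statement into one about the full Lie algebra $\g$. The use of the de Rham decomposition to verify the Cartan-involution facts is essentially a routine case-check.
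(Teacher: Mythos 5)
Your proposal is correct in outline and takes a genuinely different route from the paper in both directions, so a comparison is worthwhile.

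For ``$M$ symmetric $\Rightarrow$ centralization'', the paper invokes Wolf's theorem to deduce that elements of $\Gamma$ are Clifford--Wolf translations and then uses Ozols's axis lemma to conclude $s_{\tilde p}\gamma s_{\tilde p}^{-1}=\gamma^{-1}$. Your continuity argument --- $\tilde p\mapsto s_{\tilde p}\gamma s_{\tilde p}^{-1}$ is a continuous map from connected $\tilde M$ into the discrete group $\Gamma$, hence constant --- is more elementary and avoids both references; it is a nice simplification. You then treat the step ``centralization $\Rightarrow$ invariance'' more carefully than the paper, which glosses over the fact that $\gamma\notin G$ in general and writes $\Omega_0 \llbracket \gamma g,X\rrbracket$; your route via $\gamma(p)=g_0\cdot p$ with $g_0\in N_G(H)$, the identity $d\gamma(X^*_p)=X^*_{g_0p}$, and $G$-equivariance is cleaner.

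In the converse, the step ``invariance $\Rightarrow d\gamma(X^*_{\tilde p})=X^*_{\gamma(\tilde p)}$'' does hold, but not directly from injectivity alone. What is needed is first to observe that invariance forces $\tilde\Omega_0(T_{\gamma\tilde p}\tilde M)=\tilde\Omega_0(T_{\tilde p}\tilde M)$, so that writing $\tilde p=\llbracket g\rrbracket$, $\gamma\tilde p=\llbracket g'\rrbracket$ one gets $\Ad_{g^{-1}g'}\m=\m$ (and hence, by $\Ad$-invariance of $\bil{,}$, $\Ad_{g^{-1}g'}\h=\h$); only then does a short computation give $\tilde\Omega_0(X^*_{\tilde p})=\tilde\Omega_0(X^*_{\gamma\tilde p})$, after which injectivity of $\tilde\Omega_0$ on $T_{\gamma\tilde p}\tilde M$ applies. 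In particular you do not actually need fullness here, contrary to the caveat paragraph.

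For ``centralization $\Rightarrow M$ symmetric'' your de Rham factor-wise route works, but two remarks. First, the claim that $Z_{\isom(\tilde M_i)}(G_i)$ is a subgroup of the finite center of $G_i$ is not quite right: for $\tilde M_i = S^{2k}$, $G_i=\SO(2k+1)$ has trivial center while $Z_{\OO(2k+1)}(\SO(2k+1))=\{\pm\id\}$. The abelianness conclusion is nevertheless correct, but it is also irrelevant to what you actually use, which is only that $s_{\tilde p}\gamma s_{\tilde p}^{-1}=\gamma^{-1}$. Second, that inversion fact is essentially Ozols's axis lemma in disguise (a centralizing isometry is a Clifford--Wolf translation with an axis through $p$ reversed by $s_p$), so here you are not really avoiding the paper's tool but repackaging it. The final observation $s_{\tilde p}=gs_pg^{-1}$ combined with $\gamma g=g\gamma$ to propagate the inversion from $p$ to all of $\tilde M$ is correct and is effectively what the paper does as well.

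Overall: the proof is right, with the terseness around the injectivity step and the small misstatement about $Z(G_i)$ being the only blemishes; the continuity trick in the first direction is a genuine simplification over the paper's appeal to Wolf's theorem.
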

\begin{proof}
Suppose first that $M$ is a symmetric space, and let $\tau:\tilde{M}\to M$ denote the universal cover of $M$. Then, since the symmetry $s_{\tilde{p}}$ at any $\tilde{p}\in \tilde{M}$ is a lift of the corresponding symmetry $s_p$ at $p=\tau(\tilde{p})\in M$, it follows that for any $\gamma\in \Gamma$, $s_{\tilde{p}}\gamma s_{\tilde{p}}^{-1}$ is a lift of the identity or, in other words, $s_{\tilde{p}}\gamma s_{\tilde{p}}^{-1}\in \Gamma$. Since $M=\tilde{M}/\Gamma$ is a symmetric space and in particular a homogeneous space, by the main theorem in \cite{Wolf} it follows that every element $\gamma\in \Gamma$ is a \emph{Clifford-Wolf translation}, i.e., the displacement function $q\in \tilde{M}\mapsto d(q, \gamma(q))$ is constant. In particular, for any $\tilde{p}\in \tilde{M}$ the isometry $\gamma s_{\tilde{p}}\gamma s_{\tilde p}^{-1}=\gamma \cdot (s_{\tilde{p}}\gamma s_{\tilde p}^{-1})\in \Gamma$ is a Clifford-Wolf translation.

We claim that $\gamma s_{\tilde{p}}\gamma s_{\tilde p}^{-1}$ fixes $\tilde{p}$, which implies that $\gamma s_{\tilde{p}}\gamma s_{\tilde p}^{-1}=id$. In fact, since $\gamma$ is a Clifford-Wolf translation, then $\gamma^{-1}(\tilde p), \tilde p, \gamma (\tilde p)$ all lie on the same geodesic $c(t)$ (cf. \cite[Theorem 1.6]{Ozols}). Parametrize $c(t)$ so that $c(0)=\tilde p$, $c(1)=\gamma(\tilde p)$, $c(-1)=\gamma^{-1}(\tilde p)$. Then, since $s_{\tilde{p}}(\tilde{p})=\tilde{p}$ and $s_{\tilde p}(c(t))=c(-t)$, it follows that
\[
s_{\tilde{p}}\gamma s_{\tilde{p}}^{-1}(\tilde{p})=s_{\tilde{p}}\gamma(\tilde{p})=s_{\tilde{p}}(c(1))=c(-1)=\gamma^{-1}(\tilde{p})
\]
and therefore $\gamma s_{\tilde{p}}\gamma s_{\tilde{p}}^{-1}(\tilde{p})= \tilde{p}$, thus proving the claim.

If follows that $s_{\tilde{p}}\gamma s_{\tilde{p}}^{-1}= \gamma^{-1}$ and therefore, every $\gamma\in \Gamma$ commutes with every transvection. Since $G$ is generated by transvections, then $\Gamma$ commutes with $G$ and thus $\Ad_\gamma$ acts trivially on $\g$ for every $\gamma \in \Gamma$.

Given $\Omega_0: T\tilde{M}=G\times_H\m\to \g$ and fixing $\gamma\in \Gamma$, the map $\Omega_0\circ \gamma: T\tilde{M}=G\times_H\m\to \g$ is given by
\begin{align*}
(\Omega_0\circ \gamma) \llbracket g, X\rrbracket=& \Omega_0\llbracket \gamma g, X\rrbracket= \Ad_{\gamma g}(X)= \Ad_\gamma(\Ad_g X)= \Ad_gX=\Omega_0\llbracket g, X\rrbracket
\end{align*}
and therefore $\Omega_0$ is invariant under $\Gamma$.

On the other hand, suppose now that $\Omega_0$ is invariant under $\Gamma$. Then for every $\gamma\in \Gamma$, $\Ad_\gamma|_{\g}=id$, i.e., $\Gamma$ commutes with $G$ (recall, $G$ is connected). Since $G$ acts transitively on $\tilde{M}$ it follows that every $\gamma\in \Gamma$ is a Clifford-Wolf translation: in fact, for any $\tilde{p}, \tilde{q}\in \tilde{M}$, letting $g\in G$ be such that $g\cdot \tilde{p}=\tilde{q}$, one has
\begin{align*}
d(\tilde{p}, \gamma \tilde{p})=d(g\tilde{p}, g(\gamma \tilde{p}))=d(g\tilde{p}, \gamma( g \tilde{p}))=d(\tilde{q}, \gamma \tilde{q}).
\end{align*}
Moreover, since $G$ is also normalized by the symmetries $s_{\tilde{p}}$ centered at any $\tilde{p}\in \tilde{M}$, it follows that $s_{\tilde{p}}\gamma s_{\tilde{p}}^{-1}$, and thus $\gamma s_{\tilde{p}}\gamma s_{\tilde{p}}^{-1}$, commute with $G$ for any $\gamma\in \Gamma$. In particular $\gamma s_{\tilde{p}}\gamma s_{\tilde{p}}^{-1}$ is again a Clifford-Wolf translation. However, just as before it follows that  $\gamma s_{\tilde{p}}\gamma s_{\tilde{p}}^{-1}$ fixes $\tilde{p}$, and therefore $s_{\tilde{p}}\gamma s_{\tilde{p}}^{-1}=\gamma^{-1}$. In particular, every symmetry $s_{\tilde{p}}$ satisfies $s_{\tilde{p}}\Gamma s_{\tilde{p}}^{-1}=\Gamma$. Therefore, for any point $p=\tau[\tilde{p}]\in M/\Gamma$, one can define a symmetry $s_p:M\to M$ by $s_p[\tilde{q}]=[s_{\tilde{p}}(\tilde{q})]$. In particular, $M$ is a symmetric space.
\end{proof}

\section{Rigidity of virtual immersions with skew-symmetric second fundamental form}\label{S:rigidity}

In this section we prove the second half of the main theorem. Namely, given a minimal virtual immersion $\Omega:TM\to V$ with skew-symmetric second fundamental form, we prove that $M$ is a symmetric space and $\Omega$ is equivalent to the virtual immersion defined in the previous section.

\begin{lemma}
\label{L:locsym}
Let $(M,g)$ be a  Riemannian manifold, and $\Omega$ a $V$-valued virtual immersion with skew-symmetric second fundamental form $\II$. Then:
\begin{enumerate}[a)]
\item $\left< \II(X,Y), \II(Z,W)\right>=\left< R(X,Y)Z, W\right>$.
\item $(D_X \II)(Y,Z)=-R(Y,Z)X$.
\item $\nabla R=0$. In particular, $(M,g)$ is a locally symmetric space.
\end{enumerate}
\end{lemma}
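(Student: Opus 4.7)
The plan is to exploit the skew-symmetry of $\II$ together with the fundamental equations of Proposition \ref{P:fundamental}, proving the three items in sequence so that each builds on the previous. Throughout I fix a point $p\in M$ and extend $X,Y,Z,W$ to local vector fields; I write $D$ for the flat connection on $M\times V$ and $D^\perp$ for its normal part.

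To prove (a), I start from Gauss's equation and the first Bianchi identity for $R$. Writing Gauss's equation for the three cyclic permutations of $(X,Y,Z)$ with $W$ fixed and summing, the left-hand side vanishes by Bianchi. The six terms on the right collapse when $\II(A,B)=-\II(B,A)$ is applied: the two terms containing $\II(Y,W)$ combine to $2\langle\II(Y,W),\II(X,Z)\rangle$, the two containing $\II(X,W)$ combine to $-2\langle\II(X,W),\II(Y,Z)\rangle$, and the two containing $\II(Z,W)$ combine to $-2\langle\II(Z,W),\II(X,Y)\rangle$. Rearranging and using symmetry of the inner product yields the identity claimed in (a).

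For (b), I decompose $(D_X\II)(Y,Z) = D_X(\II(Y,Z)) - \II(\nabla_X Y,Z) - \II(Y,\nabla_X Z)$ into tangent and normal parts. Since $\II(Y,Z)\in \nu M$, the tangent part is $-S_{\II(Y,Z)}X$ by Weingarten; combining with (a) and the standard pair-swap symmetry $\langle R(X,W)Y,Z\rangle = \langle R(Y,Z)X,W\rangle$ identifies this tangent component with $-R(Y,Z)X$. The normal part is $(D^\perp_X\II)(Y,Z)$, which is symmetric in $X,Y$ by Codazzi and antisymmetric in $Y,Z$ by skew-symmetry of $\II$. Alternating these two operations six times produces $(D^\perp_X\II)(Y,Z) = -(D^\perp_X\II)(Y,Z)$, so the normal part vanishes identically.

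Part (c) follows quickly: the identity $(D^\perp_X\II)(Y,Z)=0$ says that $\II$ is parallel as a section of $T^*M\otimes T^*M\otimes \nu M$ for the tensor product of $\nabla$ and $D^\perp$. Since $\langle\,,\,\rangle$ is preserved by $D^\perp$ on the normal bundle, differentiating the identity from (a) yields $\nabla R = 0$, so $(M,g)$ is locally symmetric. The main obstacle is the symmetric-antisymmetric argument in (b) that kills the normal component: it is short but essential, and once it is in place the rest of the lemma follows mechanically from Proposition \ref{P:fundamental}.
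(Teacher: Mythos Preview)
Your arguments for (a) and (b) are correct and coincide with the paper's: the Bianchi-plus-Gauss manipulation for (a) and the Codazzi/skew-symmetry ``symmetric in $(X,Y)$, antisymmetric in $(Y,Z)$'' trick for the normal part of (b) are exactly what the paper does, and your identification of the tangent part via Weingarten is just a rephrasing of the paper's line $\langle D_X(\II(Y,Z)),W\rangle=-\langle\II(Y,Z),\II(X,W)\rangle$.

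For (c), however, you take a genuinely different and cleaner route. The paper proceeds by writing out the flatness identity $D_XD_Y(\II(Z,W))-D_YD_X(\II(Z,W))-D_{[X,Y]}(\II(Z,W))=0$, expanding with part (b), and taking the tangent part to obtain $(\nabla_XR)(Z,W)Y=(\nabla_YR)(Z,W)X$; it then needs a second symmetric/antisymmetric argument (symmetry of $\nabla R$ in the third and fifth slots versus antisymmetry in the third and fourth) to conclude $\nabla R=0$. Your approach avoids this entirely: having already established in (b) that the normal component $(D^\perp_X\II)(Y,Z)$ vanishes, you observe that $\II$ is parallel for the connection $\nabla\otimes\nabla\otimes D^\perp$, and since $D^\perp$ preserves $\langle\,,\,\rangle$ on $\nu M$, differentiating the identity in (a) immediately gives $(\nabla_UR)(X,Y,Z,W)=0$. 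This is shorter and more transparent; the paper's route has the minor advantage of not needing to isolate the statement ``$D^\perp$ is metric on $\nu M$'' (which is immediate but unstated), while yours makes the logical dependence on (b) more evident.
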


\begin{proof}
\begin{enumerate}[a)]
\item Start with Gauss' equation (see Proposition \ref{P:fundamental}(b)),
\[\left<R(X,Y)Z,W\right>=\left<\II(Y,W),\II(X,Z)\right>-\left<\II(X,W),\II(Y,Z)\right>\]
Applying the first Bianchi identity yields
\[ 0=-2\big(\left<\II(X,Y),\II(Z,W)\right> + \left<\II(Y,Z),\II(X,W)\right>
+ \left<\II(Z,X),\II(Y,W)\right> \big)\]
so that using Gauss' equation one more time we arrive at \[\left< R(X,Y)Z, W\right>=\left< \II(X,Y), \II(Z,W)\right>.\]
\item First we argue that $(D_X \II)(Y,Z)$ is tangent. Indeed, for any normal vector $\eta$, Codazzi's equation (Proposition \ref{P:fundamental}(d)) says that 
\[ \left<(D_X \II) (Y,Z) ,\eta \right>= \left<(D_Y \II) (X,Z) ,\eta \right>. \]
Thus the trilinear map $(X,Y,Z)\mapsto  \left<(D_X \II) (Y,Z) ,\eta \right>$ is symmetric in the first two entries and skew-symmetric in the last two entries, which forces it to vanish.

Next we let $W$ be any tangent vector and compute
\begin{align*}
 \left<(D_X \II) (Y,Z) ,W \right> &=  \left<D_X ( \II(Y,Z)) ,W \right> =  -\left< \II (Y,Z) ,D_XW \right>\\
 &= -\left< \II (Y,Z) ,\II(X,W) \right> = -\left< R(Y,Z)X ,W \right>
\end{align*}
where in the last equality follows we have used part (a).

\item Since the natural connection $D$ on $M\times V$ is flat, it follows that for any vector fields $X,Y,Z,W$, we have
\[ 0= D_X(D_Y(\II(Z,W))) -D_Y(D_X(\II(Z,W))) -D_{[X,Y]}(\II(Z,W)). \]
Fix $p\in M$, and take vector fields such that $[X,Y]=0$ and $\nabla Z=\nabla W=0$ at $p\in M$. Then, evaluating the equation above at $p\in M$, we have
\begin{align*}
0  = & D_X \big( (D_Y\II) (Z,W) +\II(\nabla_Y Z, W) +\II(Z, \nabla_Y W)\big)\\
& - D_Y \big( (D_X\II) (Z,W) +\II(\nabla_X Z, W) +\II(Z, \nabla_X W)\big)  \\
= & D_X (-R(Z,W)Y) + \II(\nabla_X\nabla_Y Z, W) + \II(Z,\nabla_X\nabla_Y W) \\
& -D_Y (-R(Z,W)X) - \II(\nabla_Y\nabla_X Z, W) - \II(Z,\nabla_Y\nabla_X W) \\
= & -(D_X R)(Z,W)Y + (D_Y R)(Z,W)X -\II(R(X,Y)Z,W) \\
&- \II(Z, R(X,Y)W)
\end{align*}
Taking the tangent part yields $(\nabla_X R)(Z,W)Y = (\nabla_Y R)(Z,W)X$. Taking inner product with $T\in T_pM$ we have
\[(\nabla R) (Z,W,Y,T,X)=(\nabla R) (Z,W,X,T,Y),\]
that is, $\nabla R$ is symmetric in the third and fifth entries.
But $\nabla R$ is also skew-symmetric in the third and fourth entries, so that $\nabla R=0$.
\end{enumerate}
\end{proof}

The virtual immersion $\Omega$ on $M$ lifts to a virtual immersion with skew-symmetric second fundamental form $\tilde{\Omega}$ on the universal cover $\tilde{M}$ of $M$. In the following Proposition, we prove that $\tilde{\Omega}$ is equivalent to $\tilde{\Omega}_0$.

\begin{proposition}
\label{P:unique}
Let $(\tilde{M},g_{\tilde{M}})$ be a symmetric space, and let $\Omega_j:T\tilde M\to V_j$, for $j=1,2$ be virtual immersions with skew-symmetric second fundamental forms $\II_j$. Assume $V_1, V_2$ are full. Then $\Omega_1, \Omega_2$ are equivalent.
\end{proposition}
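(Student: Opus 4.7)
Fix a basepoint $p \in \tilde M$. The strategy is to define a linear isometry $L \colon V_1 \to V_2$ at $p$ matching the infinitesimal data $(\Omega_j|_p, \II_j|_p)$, and then to propagate the identity $L \circ \Omega_1 = \Omega_2$ along geodesics via a linear ODE whose coefficients are intrinsic to $\tilde M$ (hence the same for $j=1,2$).

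A preliminary step is \emph{local fullness at $p$}: setting $W_j := \Omega_j(T_p\tilde M) + \operatorname{span}\{\II_j(X,Y)|_p : X,Y \in T_p\tilde M\} \subset V_j$, I claim $W_j = V_j$. Given a geodesic $\gamma$ from $p$ and a parallel orthonormal frame $\{e_\alpha(t)\}$ along $\gamma$, write $\dot\gamma = \sum_\alpha c^\alpha e_\alpha(t)$ (with $c^\alpha$ constants), and put $u^j_\alpha(t) := \Omega_j(e_\alpha(t))$, $v^j_{\alpha\beta}(t) := \II_j(e_\alpha(t), e_\beta(t))$. Combining $D_X\Omega(Y) = \Omega(\nabla_X Y) + \II(X,Y)$ with Lemma \ref{L:locsym}(b) and $\nabla R = 0$ (Lemma \ref{L:locsym}(c)), one obtains
\begin{align*}
\dot u^j_\alpha &= \sum_\beta c^\beta v^j_{\beta\alpha}, \\
\dot v^j_{\alpha\beta} &= -\sum_{\gamma,\delta} c^\gamma R^\delta_{\alpha\beta\gamma}\, u^j_\delta,
\end{align*}
where $R^\delta_{\alpha\beta\gamma}$ are the constant components of $R$ in the parallel frame. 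Since the initial data lies in $W_j$ and the system is linear, the trajectories stay in $W_j$; thus $\Omega_j(T\tilde M) \subset W_j$, and global fullness of $\Omega_j$ forces $W_j = V_j$.

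Now define $L \colon V_1 \to V_2$ by $L(\Omega_1(X)) := \Omega_2(X)$ and $L(\II_1(X,Y)|_p) := \II_2(X,Y)|_p$. Well-definedness and the isometry property both reduce, via Lemma \ref{L:locsym}(a) and condition (a) of Definition \ref{D:genimm}, to matching of inner products: for instance, if $\sum b_{ij}\II_1(X_i, Y_j) = 0$, then $\sum b_{ij}\langle R(X_i, Y_j)Z, W\rangle = 0$ for all $Z, W$, so $\sum b_{ij}\II_2(X_i, Y_j)$ is $\langle\cdot,\cdot\rangle_2$-orthogonal to all of $W_2 = V_2$ and hence vanishes. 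To show $L \circ \Omega_1 = \Omega_2$ on all of $T\tilde M$, I rerun the ODE above along any geodesic from $p$ to a given $q$: the system has the form $\dot\Phi = A\Phi$ with $A$ depending only on $c^\alpha$ and $R^\delta_{\alpha\beta\gamma}$, hence the \emph{same} matrix for $j = 1,2$. Since $L$ identifies the initial data by construction, uniqueness of linear ODEs yields $L(u^1(t)) = u^2(t)$ for all $t$, and geodesic completeness of the symmetric space extends this to all of $\tilde M$. The main obstacle I foresee is the local-fullness step; once the constant-coefficient structure of the ODE is recognized --- which is precisely what $\nabla R = 0$ provides --- the remainder of the argument is formal.
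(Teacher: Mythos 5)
Your proof is correct and is essentially the paper's argument rendered along geodesics: your linear system for $(u_\alpha,v_{\alpha\beta})$ is exactly the paper's auxiliary connection $\hat D$ on $T\tilde M\oplus\wedge^2T\tilde M$ together with the intertwining identity \eqref{E:connections} written in a parallel frame along a geodesic, your local-fullness step corresponds to the $D_j$-parallelism of the image of $\hat\Omega_j$, your Gram-matrix definition of $L$ at $p$ is equivalent (via Lemma \ref{L:locsym}(a)) to the paper's identification of the common kernel $\{(0,\alpha): R(\alpha)=0\}$, and uniqueness for linear ODEs replaces the paper's parallel-transport argument that $L_p$ is independent of $p$. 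The only real difference is cosmetic: you route everything through geodesics from a basepoint and Hopf--Rinow, whereas the paper works with arbitrary curves and so never invokes completeness.
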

\begin{proof}
Define a connection $\hat{D}$ on the vector bundle $T\tilde M\oplus \wedge^2 T\tilde M$ by
\[ \hat{D}_W (Z, \alpha) =
\big(\nabla_W Z - R(\alpha )W,\,\,\, W\wedge Z+\nabla_W\alpha\big)  \]
Here, for $\alpha=\sum_u X_u\wedge Y_u$, we define $R(\alpha):=\sum_u R(X_u, Y_u)$. Define bundle homomorphisms $\hat\Omega_j :T\tilde M \oplus \wedge^2 T\tilde M \to \tilde M\times V_j$, for $j=1,2$, by
\[ \hat\Omega_j(Z,\alpha) = \Big(p, \Omega_j(Z)+\II_j(\alpha)\Big) \]
for $Z\in T_{\tilde p}\tilde M,\, \alpha=\sum_u X_u\wedge Y_u \in \wedge^2 T_{\tilde p}\tilde M$, and $\II(\alpha)=\sum_u\II(X_u,Y_u)$. By Lemma \ref{L:locsym}(b), given vector fields $Z, W$ and a section $\alpha$ of $\wedge^2 T\tilde M$, we have
\begin{equation}
\label{E:connections}
(D_j)_W \big( \hat\Omega_j(Z,\alpha) \big)= \hat\Omega_j \big(\hat{D}_W (Z,\alpha) \big)
\end{equation}
where $D_j$ denotes the natural flat connection on $\tilde M\times V_j$. This implies that the image of $\hat\Omega_j$ is $D_j$-parallel, and hence, by minimality of $V_j$, that $\hat\Omega_j$ is onto $\tilde M\times V_j$. In particular, for $j=1,2$ the normal space in $V_j$ is spanned by $\II_j(X,Y)$, for $X,Y\in T_{\tilde p} \tilde M$.

Now, we claim that
\[
\ker \hat{\Omega}_1=\ker\hat{\Omega}_2=\big\{ (0,\alpha) \mid\alpha\in\wedge^2T_{\tilde p}\tilde M,\, R(\alpha)=0\big\}.
\]
Indeed, on the one hand if $R(\alpha)=0$ then for every $\beta\in \wedge^2 T_{\tilde p}\tilde M$ one obtains that  $\left<\II_j(\alpha),\II_j(\beta)\right>=\left<R(\alpha),\beta\right>=0$ by Lemma \ref{L:locsym}(a). Since the inner product on $\nu_{\tilde p}\tilde M\subset V_j$ is nondegenerate and the normal space in $V_j$ consists of the elements $\II_j(\beta)$ by the conclusion above, it follows that $\II(\alpha)=0$ and thus $\hat{\Omega}_j(0,\alpha)=0+\II_j(\alpha)$ is zero.

On the other hand, if $\hat{\Omega}_j(Z,\alpha)=0$ then $\Omega_j(Z)=0$ and $\II_j(\alpha)=0$, which implies $Z=0$ and, for every $\beta\in \wedge^2T_{\tilde p}\tilde M$, $0=\left<\II_j(\alpha), \II_j(\beta)\right>=\left<R(\alpha),\beta\right>$ by Lemma \ref{L:locsym}(a). Since the inner product on $\wedge^2T_{\tilde p}\tilde M$ is non-degenerate, it follows that $R(\alpha)=0$ in $\wedge^2 T_{\tilde{p}}\tilde M$, and this ends the proof of the claim.


Since $\hat{\Omega}_i$, $i=1,2$ are both surjective with the same kernel, there is a well-defined bundle isomorphism $L:M\times V_1\to M\times V_2$ by
\[
L \big(\hat\Omega_1(Z,\alpha)\big) = \hat\Omega_2(Z,\alpha)
\]
for $Z\in T_pM$, $\alpha\in \wedge^2T_pM$.

We claim the linear map $L_p=L|_{\{p\}\times V_1}: \{p\}\times V_1\to \{p\}\times V_2$ is independent of $p\in M$. Indeed, given two points $p,q\in M$, choose a curve $\gamma(t)$ in $M$ joining $p$ to $q$. Choose $\hat{D}_1$-parallel vector fields $Z,X_i,Y_i$ along $\gamma(t)$ such that $\hat\Omega_1(Z,\sum X_i\wedge Y_i)$ is constant equal to $v\in V_1$. Then, by \eqref{E:connections}, $\hat D_{\dot\gamma} (Z,\sum X_i\wedge Y_i)\In \ker\hat\Omega_1 $. But by Lemma \ref{L:locsym}(a), $\ker\hat\Omega_1=\ker\hat\Omega_2$. Therefore, again by \eqref{E:connections}, we see that $L(v)$ is constant along $\gamma$, so that $L_p=L_q$.
Calling this one linear map $L$, we have $\hat\Omega_2=L\circ\hat\Omega_1$ by construction. In particular, $\Omega_2=L\circ\Omega_1$, finishing the proof that $\Omega_1$ and $\Omega_2$ are equivalent.

\end{proof}

Piecing all together, we can prove the main Theorem:

\begin{proof}[Proof of the Main Theorem]
Suppose first that $M$ is a symmetric space, and let $\tilde{M}$ be its universal cover. From Lemma \ref{L:Omega0}, there exists a skew-symmetric virtual immersion $\tilde{\Omega}_0:T\tilde{M}\to M\times V$ with $V=\g$. By Lemma \ref{L:symm-and-invariance}, since $M$ is symmetric then $\tilde{\Omega}_0$ is invariant under $\pi_1(M)$ and therefore $\tilde{\Omega}_0$ descends to a skew-symmetric virtual immersion $\Omega:TM\to V$.

Suppose now, on the other hand, that $M$ admits a full, skew-symmetric virtual immersion $\Omega:TM\to  V$. By Lemma \ref{L:locsym} $M$ is locally symmetric, thus the universal cover $\tilde M$ is a symmetric space and $\Omega$ lifts to a skew-symmetric virtual immersion $\tilde{\Omega}: T\tilde{M}\to V$ invariant under the action of $\Gamma=\pi_1(M)$. Since $\tilde{M}$ also admits the virtual immersion $\tilde\Omega_0$, which is full by Lemma \ref{L:full}, it follows from the rigidity Proposition \ref{P:unique} that $\tilde{\Omega}=\tilde{\Omega}_0$, and in particular $\tilde\Omega_0$ is invariant under the action of $\Gamma$. By Lemma \ref{L:symm-and-invariance}, it follows that $M$ is a symmetric space.
\end{proof}

\bibliographystyle{alpha}

\end{document}